\theoremstyle{plain}
\newtheorem{theorem}{Theorem}[section]
\newtheorem{proposition}[theorem]{Proposition}
\theoremstyle{definition}
\newtheorem{definition}[theorem]{Definition}
\theoremstyle{remark}
\numberwithin{equation}{section}
\newcommand{\C}{\mathbb{C}}
\newcommand{\N}{\mathbb{N}}
\newcommand{\Q}{\mathbb{Q}}
\newcommand{\R}{\mathbb{R}}
\newcommand{\calm}{\mathcal{M}}
\newcommand\restr[2]{{\left.\kern-\nulldelimiterspace#1\right|_{#2}}}
    \def\@thm#1#2#3{%
      \ifhmode
        \unskip\unskip\par
      \fi
      \normalfont
      \trivlist
      \let\thmheadnl\relax
      \let\thm@swap\@gobble
      \let\thm@indent\indent 
      \thm@headfont{\scshape}
      \thm@notefont{\fontseries\mddefault\upshape}%
      \thm@headpunct{.}
      \thm@headsep 5\p@ plus\p@ minus\p@\relax
      \thm@space@setup
      #1
      \@topsep \thm@preskip               
      \@topsepadd \thm@postskip           
      \def\dth@counter{#2}%
      \ifx\@empty\dth@counter
        \def\@tempa{%
          \@oparg{\@begintheorem{#3}{}}[]%
        }%
      \else
        \H@refstepcounter{#2}%
        \hyper@makecurrent{#2}%
        \let\Hy@dth@currentHref\@currentHref
        \AddToHookNext{para/begin}{\MakeLinkTarget*{\Hy@dth@currentHref}}%
        \def\@tempa{%
          \@oparg{\@begintheorem{#3}{\csname the#2\endcsname}}[]%
        }%
      \fi
      \@tempa
    }%
\@clubpenalty \everypar{}%
\title{$p$-adic alternated Julia sets}
\author{Rui-Xi Wang}
\subjclass[2020]{11S82, 37F10}
\keywords{Julia sets, $p$-adic dynamics, alternated Julia sets}
\subjclass[2020]{37F10, 11S82}
\address{Department of Mathematics, Massachusetts Institute of Technology, 77 Massachusetts Avenue
Cambridge, MA, USA}
\email{rxwangtw@mit.edu, a36466136@gmail.com}
\date{\today}
\begin{document}

\begin{abstract}
  The study of dynamical systems involves analyzing how functions behave under iteration in different mathematical spaces. In the context of complex dynamics, tools such as Julia sets and filled Julia sets are used to understand the long-term behavior of functions in the complex Euclidean field. In this paper, we will present a review of Julia sets and filled Julia sets, provide an overview of the mathematical formulation of the alternated Julia sets introduced in the work of Danca-Romera-Pastor, extend it to the $p$-adic setting, and propose a tool that can potentially be used to study the arithmetic dynamics of various types of functions. Additionally, we will summarize key results on connectivity properties and visualization techniques as discussed in the work of Danca-Bourke-Romera and provide a visualization algorithm and pseudocode that enable the visualization of alternated Julia sets with various connectivity properties.
\end{abstract}

\maketitle

\setcounter{tocdepth}{2} 
\tableofcontents

\section{Introduction}
In complex dynamics, iterated functions generate rich and intricate structures known as Julia sets. For a polynomial 
\( F: \C \to \C \), the \textbf{filled Julia set} is the set of points whose orbits under iteration of \( F \) remain bounded, while the \textbf{Julia set} is its boundary. 
Variants such as the \textbf{alternated Julia set}, which alternates between different polynomials at each step of iteration, reveal new fractal geometries and dynamical behaviors beyond the classical setting.

This paper introduces a new object of study: the \textbf{$p$-adic alternated Julia set}. Extending the concept of alternated Julia sets to the non-Archimedean setting of $\C_p$, the $p$-adic analog of the complex field $\C$, we define these sets analogously using the $p$-adic norm to assess orbital boundedness. The resulting dynamics exhibit features distinct from their complex counterparts, shaped by the topology of $\C_p$.

We begin by reviewing the classical theories and properties of Julia sets, filled Julia sets, Mandelbrot sets, and alternated Julia sets introduced in \cite[Section 2]{MR2567586}, along with a visualization algorithm and examples illustrating a range of connectivity properties of alternated Julia sets discussed in \cite[Sections 2 and 3]{MR3080737}. Building on this foundation, we review the existing theory of $p$-adic Mandelbrot and Julia sets before formulating a definition of $p$-adic alternated Julia sets by composing two given functions or dynamics in the $p$-adic setting. We then prove several foundational properties of these sets, highlighting the novel behaviors of alternated Julia sets in the $p$-adic context and drawing comparisons with the classical complex case. Specifically, the connectivity of a given $p$-adic alternated Julia set can be determined similarly to its complex counterpart—that is, by checking the boundedness of the auxiliary $p$-adic Julia set generated by composing the two alternated dynamics. The patterns of some $p$-adic alternated Julia sets can also be determined by comparing the magnitude of the degree of the composed dynamics and the $p$-value, as described in \cite[Theorem 1]{MR3113229}.

\subsection{Acknowledgements}
The author is deeply grateful to Robin Zhang for organizing MIT's 18.784 (Seminar in Number Theory) and for his invaluable guidance and feedback throughout the writing process. The author also thanks Alejandro Reyes and Lucy Epstein for their insightful discussions and detailed feedback during the 18.784 peer review session.

\section{Julia sets, filled Julia sets, and Mandelbrot sets}
Julia sets and filled Julia sets are common tools for studying the complex-dynamical behavior of a given function. For a given function $F:\C\rightarrow \C$, the filled Julia set $K$ is the set of points where their corresponding orbits under $F$ are bounded. In other words, we can define the filled Julia set as follows:
\begin{definition}
    The filled Julia set $K(F)$ of a given function $F:\C\rightarrow \C$ is the set of points $z$ where the orbit $\{F^n(z)\}$ is bounded.
\end{definition}
 With the definition of the filled Julia set $K(F)$, we can also define the Julia set $J(F)$ that can be used to describe the dynamical property of a given function.
 \begin{definition}
 The Julia set $J(F)$ is the boundary of the filled Julia set $K(F)$. In mathematical terms, $J(F) = \partial K(F)$.
\end{definition}
The points in the Julia set exhibit chaotic behaviors. More specifically, points lying outside the filled Julia set diverge to infinity under repeated iterations of the function $F$, while points in the interior of the filled Julia set remain bounded. In contrast, the dynamical behavior of points in the Julia set is highly sensitive to perturbations and lacks stability. 

Notably, the Julia set and the filled Julia set associated with a given function share the same connectivity properties. For a polynomial in the quadratic family $F:\C\rightarrow\C$, $F(z) = z^2 + c$, the corresponding Julia set and filled Julia set are either connected or totally-disconnected. The set of complex numbers $c$ that can form a connected Julia set is called the Mandelbrot set. Different components of the Mandelbrot set correspond to different dynamical behaviors of the corresponding filled Julia sets, resulting in different fractal patterns that can be used to generate beautiful visualizations. 
\subsection{Examples}
The visualization of the Mandelbrot set and the corresponding connected filled Julia sets of different points in the Mandelbrot set are provided in Figure~\ref{fig:Julia_Mandelbrot}, adapted from \cite[Figure 1]{bourke2001juliaset}. Different components of the Mandelbrot set correspond to different dynamical behaviors of the corresponding filled Julia sets. For example, the largest bulb of the Mandelbrot set corresponds to the filled Julia sets that have attractive fixed points (i.e., there exists an $x$ such that $|F'(x)| < 1$), while the second-largest bulb corresponds to the filled Julia sets that have attractive 2-cycles (i.e., there exists an $x$ such that $|(F^2)'(x)| < 1$). 
\begin{figure}[htbp]
  \centering
  \includegraphics[width=0.5\textwidth]{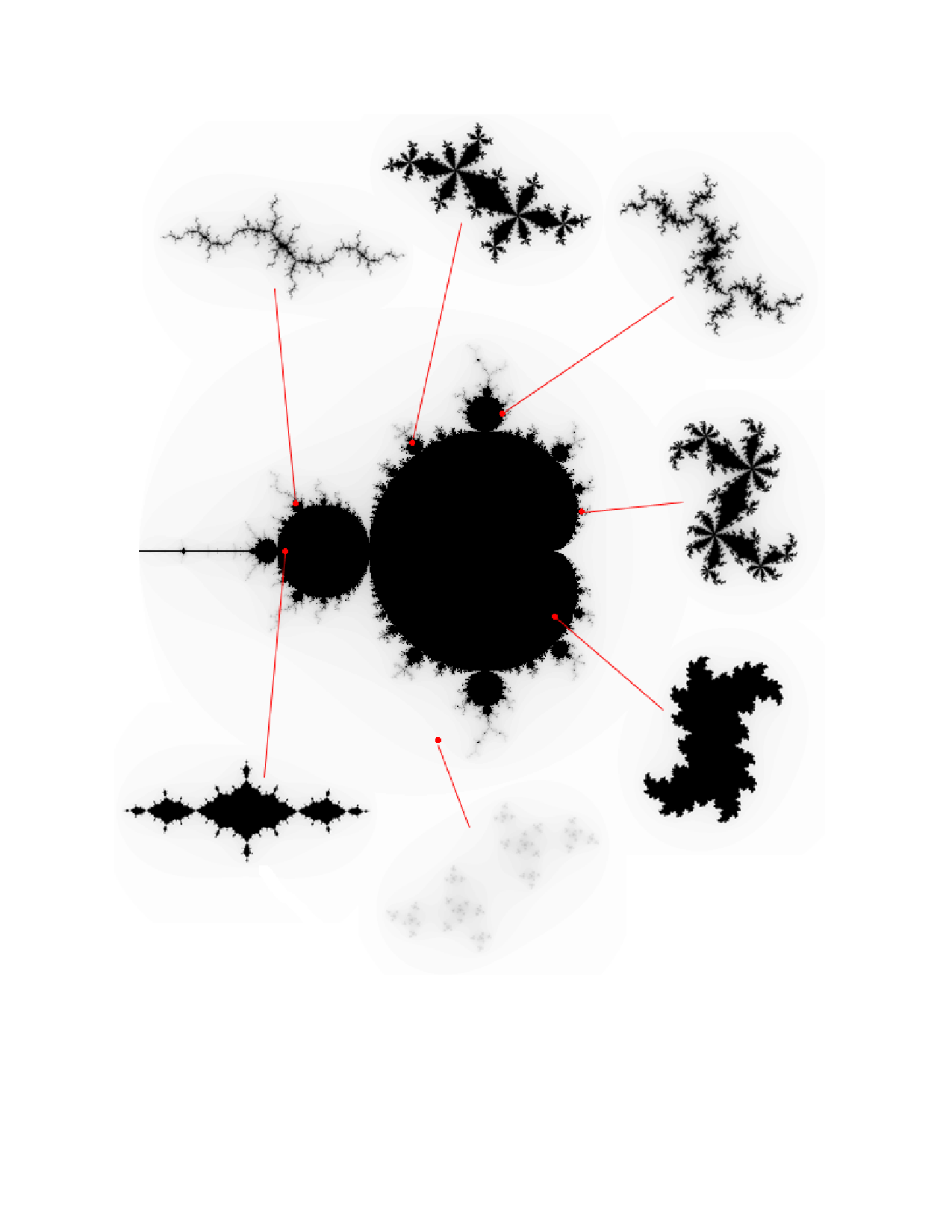} 
  \caption{A visualization of the Mandelbrot set and the corresponding connected Julia sets of different components of the Mandelbrot set. The plot is generated by Python and is adapted from \cite[Figure 1]{bourke2001juliaset}.}
  \label{fig:Julia_Mandelbrot}
\end{figure}
\FloatBarrier
\section{Alternated filled Julia sets}
\subsection{Formulations and Boundedness Properties} 
 Although we can use Julia sets to model the behavior of a given dynamical system, there are multiple types of interactions in some more complex systems that cannot be described with a single function. To provide a more general framework for the problem, imagine that there are two different functions in the quadratic family, $F_1$ and $F_2$, that act on a given point $z_0$ iteratively. We can define the corresponding orbit of iteratively applying the two given functions in the quadratic family as follows.
\begin{definition}
    For two given functions 
        \[
      \begin{tikzcd}[row sep=tiny]
        F_1: \C \arrow[r] & \C \\
        z \arrow[r, mapsto] & z^2 + c_1 \end{tikzcd}\]
         \text{ and}
        \[\begin{tikzcd}[row sep=tiny]
        F_2: \C \arrow[r] & \C \\
        z \arrow[r, mapsto] & z^2 + c_2 
      \end{tikzcd}\]
     where $c_1$ and $c_2$ are constants in $\C$, we define the orbit $P_{c_1c_2}(z_0)$ of a given starting point $z_0\in\C$ as the set \begin{align*}
         \{z_i|z_{2i} = F_2(z_{2i-1}) = z_{2i-1}^2+c_2, z_{2i+1} = F_1(z_{2i})=z_{2i}^2+c_1\}
     \end{align*} where the even terms of the orbits are generated by applying $F_2$ on the previous terms while the odd terms are generated by applying $F_1$ on the previous terms.
\end{definition}
Similarly, we can also swap $F_1$ and $F_2$ to define another orbit $P_{c_2c_1}(z_0)$:   
\begin{align*}
        P_{c_2c_1}(z_0) = \{z_i|z_{2i} = F_1(z_{2i-1}) = z_{2i-1}^2+c_1, z_{2i+1} = F_2(z_{2i})=z_{2i}^2+c_2\}.
    \end{align*}
  Similar to the definition of filled Julia sets, we can also define the alternated filled Julia set of two alternated dynamics by the boundedness of the orbits.
  \begin{definition}
      The alternated filled Julia set $K_{c_1c_2}$ corresponding to the orbit type $P_{c_1c_2}$ is defined as the set of points where their orbits under the alternated dynamics are bounded:
        \[
        K_{c_1c_2} = \{z|P_{c_1c_2}(z)\text{ is bounded}\}.
        \] 
  \end{definition}
  To analyze whether a given orbit $P_{c_1c_2}(z)$ is bounded, we need the following proposition that relates the boundedness of the odd terms and the even terms of the orbit.
  \begin{proposition}\cite[Proposition 2.1]{MR2567586}
    \label{prop:boundedness}
    \begin{enumerate}
        \item If $z_{2i}$ is bounded, then $z_{2i-1}$ is also bounded.
        \item If $z_{2i}$ is unbounded, $z_{2i+1}$ is also unbounded.
    \end{enumerate}
  \end{proposition}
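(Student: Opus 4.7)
The plan is to argue each statement directly from the two defining recurrences $z_{2i} = z_{2i-1}^2 + c_2$ and $z_{2i+1} = z_{2i}^2 + c_1$, using only the triangle inequality (and its reverse form) applied to the usual complex modulus. Both claims are really statements about the sequences $\{z_{2i-1}\}_{i\geq 1}$ and $\{z_{2i}\}_{i\geq 0}$, so I will read ``bounded'' as boundedness of the relevant subsequence and ``unbounded'' as the existence of a subsequence tending to $\infty$.

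For part (1), I would start from $z_{2i-1}^2 = z_{2i} - c_2$ and take absolute values to get $|z_{2i-1}|^2 \leq |z_{2i}| + |c_2|$. Assuming $|z_{2i}| \leq M$ for all $i$, this yields the uniform bound $|z_{2i-1}| \leq \sqrt{M + |c_2|}$, so the odd-indexed subsequence is bounded whenever the even-indexed one is. For part (2), I would instead use the reverse triangle inequality on $z_{2i+1} = z_{2i}^2 + c_1$ to conclude $|z_{2i+1}| \geq |z_{2i}|^2 - |c_1|$. If $\{z_{2i}\}$ is unbounded, choose a subsequence with $|z_{2i_k}| \to \infty$; then $|z_{2i_k+1}| \geq |z_{2i_k}|^2 - |c_1| \to \infty$, so $\{z_{2i+1}\}$ is unbounded as well.

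There is no genuine obstacle here: the proof is a two-line estimate in each case. The only small subtlety worth flagging explicitly is the logical form of the statements. Part (1) is a bound on the whole sequence $\{z_{2i-1}\}$ given a bound on $\{z_{2i}\}$, which is where the monotone square-root estimate above is needed. Part (2), by contrast, is a one-sided statement: from a single divergent even-indexed subsequence one extracts a divergent odd-indexed subsequence shifted by one index, which is automatic from $|z^2 + c_1| \to \infty$ as $|z| \to \infty$. I will make sure the quantifiers are written out carefully in the final write-up so that the reader sees these two claims are not converses of one another but rather two halves of the ``bounded $\Leftrightarrow$ bounded'' equivalence that justifies checking only one parity when deciding membership in the alternated filled Julia set.
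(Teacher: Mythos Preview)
Your proposal is correct and follows essentially the same approach as the paper: both argue directly from the recurrences $z_{2i-1}^2 = z_{2i} - c_2$ and $z_{2i+1} = z_{2i}^2 + c_1$, with the paper simply writing $z_{2i-1} = \sqrt{z_{2i} - c_2}$ and leaving the triangle-inequality estimates implicit. Your version is more careful about quantifiers and makes the bounds explicit, which is an improvement over the paper's one-line sketch.
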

  \begin{proof}
      $z_{2i-1} = \sqrt{z_{2i}-c_2}$ is bounded if $z_{2i}$ is also bounded. \\Similarly, $z_{2i+1} = z_{2i}^2 + c_1$ is unbounded if $z_{2i}$ is also unbounded.
  \end{proof}
  We can formulate an auxiliary orbit (which can be used to study the boundedness of the alternated orbit) of the alternated dynamics for a given starting point $z_0\in\C$ as follows. 
  \begin{definition}\cite[Equation 5]{MR2567586}:
      A quartic auxiliary orbit $Q_{c_1c_2}(z_0)$ of a given starting point $z_0$ and given parameters $c_1, c_2\in\C$ can be defined as the even terms of the alternated orbit $P_{c_1c_2}(z_0)$:
        \[
      Q_{c_1c_2}(z_0) = \{z_i|z_i=(z_{i-1}^2+c_1)^2+c_2\}. 
      \]
  \end{definition}
  With the assistance of Proposition \ref{prop:boundedness}, we can have the following theorem.
  \begin{theorem} \cite[Theorem 2.2 and 2.3]{MR2567586}
        \label{thm:qua}
      The boundedness of $Q_{c_1c_2}(z_0)$ implies the boundedness of $P_{c_1c_2}$ and therefore the quartic system \[F(z) = (z^2+c_1)^2 + c_2\] and the alternated system share the same filled Julia set.
  \end{theorem}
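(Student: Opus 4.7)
The plan is to observe that $Q_{c_1c_2}(z_0)$ is, by construction, precisely the subsequence of even-indexed iterates $\{z_0, z_2, z_4, \ldots\}$ of the alternated orbit $P_{c_1c_2}(z_0)$, since applying $F_1$ followed by $F_2$ on $z_{2i}$ yields $z_{2i+2}=(z_{2i}^2+c_1)^2+c_2=F(z_{2i})$. Consequently, proving that the alternated system and the quartic system $F(z)=(z^2+c_1)^2+c_2$ share the same filled Julia set reduces to the logical equivalence: $P_{c_1c_2}(z_0)$ is bounded if and only if $Q_{c_1c_2}(z_0)$ is bounded.

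The forward direction is immediate: since $Q_{c_1c_2}(z_0)$ is a subsequence of $P_{c_1c_2}(z_0)$, any bound on the full orbit automatically bounds the even-indexed subsequence. This yields the inclusion $K_{c_1c_2}\subseteq K(F)$.

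For the reverse direction, which carries the real content, I would assume $Q_{c_1c_2}(z_0)$ is bounded by some constant $M$, so that $|z_{2i}|\leq M$ for every $i\geq 0$. Applying Proposition~\ref{prop:boundedness}(1) uniformly in $i$, each odd-indexed term satisfies $|z_{2i-1}|^2=|z_{2i}-c_2|\leq M+|c_2|$, so $|z_{2i-1}|\leq\sqrt{M+|c_2|}$. Combining these uniform bounds on the even- and odd-indexed subsequences produces a single bound on the entire orbit $P_{c_1c_2}(z_0)$, so $z_0\in K_{c_1c_2}$, giving the reverse inclusion $K(F)\subseteq K_{c_1c_2}$.

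The only subtlety I anticipate is to read Proposition~\ref{prop:boundedness} as furnishing a uniform bound across all indices rather than a merely term-by-term implication; this is really just bookkeeping, made possible by the fact that the relation $z_{2i-1}^2=z_{2i}-c_2$ holds identically. Once this is handled, no genuine obstacle remains, and the equivalence of filled Julia sets follows by the two inclusions above.
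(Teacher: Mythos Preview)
Your proposal is correct and follows exactly the route the paper indicates: the paper does not spell out a proof but simply says the theorem follows ``with the assistance of Proposition~\ref{prop:boundedness},'' and your argument is precisely the fleshing-out of that hint, using part~(1) to pass a uniform bound from the even-indexed subsequence $Q_{c_1c_2}(z_0)$ to the odd-indexed terms. Your remark that the proposition must be read as yielding a \emph{uniform} bound is the right point of care, and your explicit inequality $|z_{2i-1}|\leq\sqrt{M+|c_2|}$ makes this transparent.
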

  This gives us a tool to analyze the boundedness and connectivity of a given alternated filled Julia set. \\
  \subsection{Connectivities} 
  As stated in the previous section, every alternated filled Julia set has an equivalent auxiliary quartic filled Julia set. We can hence investigate the connectivity of a given filled Julia set by analyzing the connectivity of the auxiliary filled Julia set. To explore the connectivity of a given (filled) Julia set, we need a theorem that was proven by Scott Sutherland. 
  \begin{theorem}\cite[Section 6]{sutherland2014}
  \label{thm:connectivity}
\begin{enumerate}
\item The Julia set $J(F)$ of a function $F:\C\rightarrow\C$ is connected if and only if the orbits of all critical points of $F$ under iteration are bounded.
\item The Julia set $J(F)$ of a function 
$F:\C\rightarrow\C$ is totally disconnected if and only if the orbits of all critical points of 
$F$ under iteration are unbounded. Julia sets of this type are commonly referred to as Fatou dust.
\end{enumerate}
  \end{theorem}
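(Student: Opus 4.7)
The plan is to invoke the classical Böttcher coordinate at $\infty$ and track how far it extends across the basin of infinity, since the only obstruction to such extension is a critical point whose orbit escapes. For a polynomial $F$ of degree $d\ge 2$, there is a neighborhood of $\infty$ on which $F$ is conformally conjugate via some $\varphi$ to $w\mapsto w^d$, satisfying the functional equation $\varphi(F(z))=\varphi(z)^d$. I would construct $\varphi$ as a limit of $d^n$-th roots of $F^n(z)$ near infinity and then enlarge its domain iteratively via the functional equation.

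For the connected direction, assume every critical orbit of $F$ is bounded. Then every critical point lies in the filled Julia set $K(F)$, so the analytic continuation of $\varphi$ along any path in $\C\setminus K(F)$ never meets a critical value. Hence $\varphi$ extends to a conformal isomorphism from $\C\setminus K(F)$ onto $\{|w|>1\}$. This exhibits the complement of $K(F)$ in the Riemann sphere as a topological disk, which forces $K(F)$ to be connected, and therefore $J(F)=\partial K(F)$ is connected.

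For the totally disconnected direction, assume every critical orbit is unbounded. Choose a closed disk $D$ of radius $R$ large enough that $F^{-1}(D)\subset\mathrm{int}\,D$ and that, after finitely many iterates, every critical value of $F$ lies outside $D$. Since preimages of disks avoiding critical values are disjoint unions of topological disks by Riemann--Hurwitz, $F^{-n}(D)$ is a disjoint union of $d^n$ topological disks for all large $n$. A Koebe-type distortion estimate shows the diameters of these disks shrink to zero, so $K(F)=\bigcap_n F^{-n}(D)$ is a Cantor set and $J(F)=K(F)$ is totally disconnected.

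The hard part will be the quantitative step in the totally disconnected case: rigorously bounding the diameters of the nested preimage components so that they shrink to points. The standard approach uses the hyperbolic metric on $\C$ minus the postcritical set, which is contracted under the inverse branches of $F$, together with Koebe's distortion theorem to compare with the Euclidean metric. The converse implications then follow by contraposition, using that the obstruction to extending $\varphi$ is exactly the presence of escaping critical points, so the two forward constructions above recover the required hypothesis on critical orbits from the topological conclusion on $J(F)$.
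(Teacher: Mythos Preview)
The paper does not prove this theorem; it is imported wholesale from \cite{sutherland2014} and used as a black box for the connectivity discussion that follows. There is therefore no in-paper argument to compare your proposal against.

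On the substance: your outline of the two forward implications is the standard one and is sound. B\"ottcher's coordinate extends to a conformal isomorphism of the entire basin of infinity onto $\{|w|>1\}$ precisely when no critical point escapes, and this forces $K(F)$, hence $J(F)=\partial K(F)$, to be connected. When every critical point escapes, the nested preimages $F^{-n}(D)$ of a large disk eventually split into $d^n$ univalent pieces, and hyperbolic contraction on the complement of the postcritical set gives the shrinking of diameters you need.

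The gap is your last paragraph. Contraposition of the forward implication in (1) yields: if some critical orbit escapes, then $J(F)$ is disconnected. That does settle the converse of (1). But it says nothing about the converse of (2), which asserts that a totally disconnected Julia set forces \emph{every} critical orbit to escape. This is not a formality, and in fact it is false as stated for polynomials of degree $\ge 3$: Branner and Hubbard produced cubic polynomials with one bounded critical orbit whose Julia set is nonetheless a Cantor set (the full characterization---$J(F)$ is a Cantor set iff every critical component of $K(F)$ is aperiodic---is the Branner--Hubbard conjecture, settled only in 2009 by Qiu--Yin and Kozlovski--van~Strien, and its proof lies far beyond a B\"ottcher/Koebe argument). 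Your sentence ``the two forward constructions above recover the required hypothesis'' conflates the two converses. For the paper's purposes, where the theorem is only applied to deduce connectivity properties \emph{from} the behavior of critical orbits, the forward directions you sketched are all that is actually used; but if you want (2) as a genuine biconditional you must either restrict to degree $2$ or weaken it to the one-sided implication.
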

  As we have shown in Theorem \ref{thm:qua}, the Julia set of the quartic system $F(z) = (z^2+c_1)^2+c_2$ is equivalent to the alternated Julia set. We can analyze the connectivity of the alternated filled Julia set by analyzing $F$. By solving the following equation 
  \[
  F'(z) = 2(z^2+c_1)*2z = 0,
  \]
  we can get three critical points of F to be $0$ and $\pm\sqrt{-c_1}$. By discussing the properties of the corresponding critical orbits, we can determine the connectivity of the alternated Julia set we are interested in.
  \begin{theorem} \cite[Section 3.1]{MR2567586}
      \begin{enumerate}
          \item The alternated Julia set is connected if the orbits of $0$ and $\pm\sqrt{-c_1}$ are bounded.
          \item 
          The alternated Julia set is totally disconnected if the orbits of $0$ and $\pm\sqrt{-c_1}$ are unbounded.
          \item The alternated Julia set is disconnected if the orbit of either $0$ or $\pm\sqrt{-c_1}$ is unbounded.
      \end{enumerate}
  \end{theorem}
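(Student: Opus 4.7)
The plan is to reduce the statement about the alternated Julia set to a statement about the auxiliary quartic polynomial and then invoke Sutherland's critical-orbit dichotomy.

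First, I would apply Theorem~\ref{thm:qua}, which identifies the alternated filled Julia set with the filled Julia set of the quartic $F(z) = (z^2+c_1)^2 + c_2$. Since Julia sets and filled Julia sets have the same connectivity properties, it suffices to analyze the connectivity of $J(F)$. Next, I would compute the critical points of $F$: differentiating gives $F'(z) = 4z(z^2+c_1)$, so the set of critical points is exactly $\{0, \sqrt{-c_1}, -\sqrt{-c_1}\}$, as already noted in the text preceding the theorem.

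With these critical points identified, I would apply Theorem~\ref{thm:connectivity}. Case (1): if the orbits under $F$ of all three critical points $0, \pm\sqrt{-c_1}$ are bounded, then part~(1) of Theorem~\ref{thm:connectivity} yields that $J(F)$ is connected, and hence so is the alternated Julia set. Case (2): if the orbits of all three critical points are unbounded, then part~(2) of Theorem~\ref{thm:connectivity} gives that $J(F)$ is totally disconnected, and therefore so is the alternated Julia set.

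For Case (3) — the "mixed" case, where at least one of the critical orbits is unbounded but not all of them are — I would argue by exclusion: if $J(F)$ were connected, then by Theorem~\ref{thm:connectivity}(1) every critical orbit would have to be bounded, contradicting the hypothesis; so $J(F)$ must be disconnected. This is the subtlest step, since one has to invoke the contrapositive direction of the "if and only if," and also note that the set need not be totally disconnected (indeed it is not, since at least one critical orbit remains bounded, ruling out part~(2) of Theorem~\ref{thm:connectivity}). Thus the alternated Julia set is disconnected in the sense of failing to be connected, completing the trichotomy. I do not expect any computational obstacle here; the main subtlety is simply to read Sutherland's dichotomy carefully and handle the mixed case by contraposition rather than by a direct construction.
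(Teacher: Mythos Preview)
Your proposal is correct and matches the paper's approach exactly: the paper reduces to the quartic $F(z)=(z^2+c_1)^2+c_2$ via Theorem~\ref{thm:qua}, computes its critical points $0,\pm\sqrt{-c_1}$, and then reads off the three cases from Sutherland's dichotomy (Theorem~\ref{thm:connectivity}), with no further argument given. Your handling of case~(3) by contraposition is in fact more explicit than what the paper provides.
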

  As the result, the boundedness of a given alternated Julia set can be inferred through simulating the alternated dynamics in a computer program. 
  \subsection{Visualization algorithms and examples}
  In this section, we move from the theoretical formulations of the alternated Julia set to its visual representation. By leveraging computer programs, we can transform the mathematical concepts into vivid and intuitive images, offering a new perspective on the structure and complexity of the set.
  Since the behaviors of a filled Julia set can be modeled by the corresponding quartic function, it seems like the visualizations of the desired set can be easily shown with the JuliaSetPlot command in Mathematica. However, due to the lack of computational power, the command cannot be directly plotted in an online Mathematica notebook. As a result, we decide to provide a visualization algorithm that can be run in a Google Colab notebook. To simulate the dynamics, we apply the alternated functions to a set of initial points within a given range (in our case, we choose $a+bi$ for $a, b\in\R$, $|a| < 1.5$, $|b| < 1.5$). We set a boundary of absolute distances between $F^n(z_0)$ and the origin for a given $z_0$. After that, we plot the number of iterations needed for a given point to go beyond the threshold. The pseudocode for the visualization algorithm can be found in Algorithm \ref{alg:pseudocode}. We choose three different pairs of $c_1$ and $c_2$, representing different types of connectivities of the corresponding alternated Julia sets. The visualization can be found in Figure \ref{fig:totally_disconnected}, \ref{fig:disconnected}, and \ref{fig:connected}. \\
  \begin{algorithm}
\caption{Alternated Julia sets visualization}
\label{alg:pseudocode}
\begin{algorithmic}
\State \textbf{Input:} \textit{width}, \textit{height}, \textit{zoom}, \textit{center\_x}, \textit{center\_y}, \textit{$c_1$}, \textit{$c_2$}, \textit{max\_iter}, \textit{threshold}
\State \textbf{Output:} \textit{image}[height][width]

\For{$y = 0$ to $height - 1$}
    \For{$x = 0$ to $width - 1$}
        \State $zx \gets (x - \frac{width}{2}) \cdot zoom + center\_x$
        \State $zy \gets (y - \frac{height}{2}) \cdot zoom + center\_y$
        \State $z \gets zx + i \cdot zy$
        \State $iter \gets 0$
        \While{$iter < max\_iter$ \textbf{and} $|z| < threshold$} 
            \If{$iter$ is odd}
                \State $z \gets z^{2} + c_1$
            \Else
                \State $z \gets z^{2} + c_2$
            \EndIf   
            \State $iter \gets iter + 1$
        \EndWhile
        \State $image[y][x] \gets iter$
    \EndFor
\EndFor

\State \Return $image$
\end{algorithmic}
\end{algorithm}

\begin{figure}[htbp]
  \centering
  \includegraphics[width=0.5\textwidth]{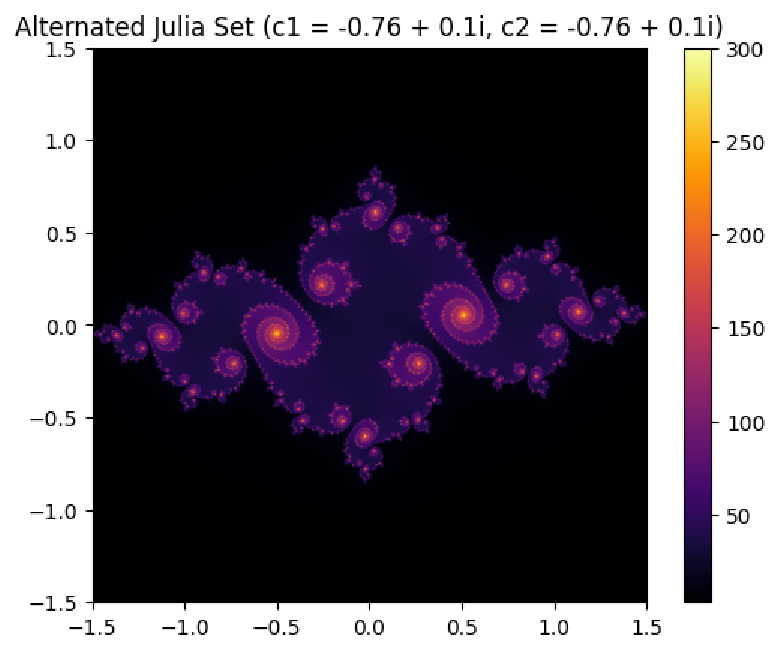} 
  \caption{A totally disconnected alternated Julia set with $c_1=-0.76+0.1i$, $c_2=-0.76+0.1i$. }
  \label{fig:totally_disconnected}
\end{figure}

\begin{figure}[htbp]
  \centering
  \includegraphics[width=0.5\textwidth]{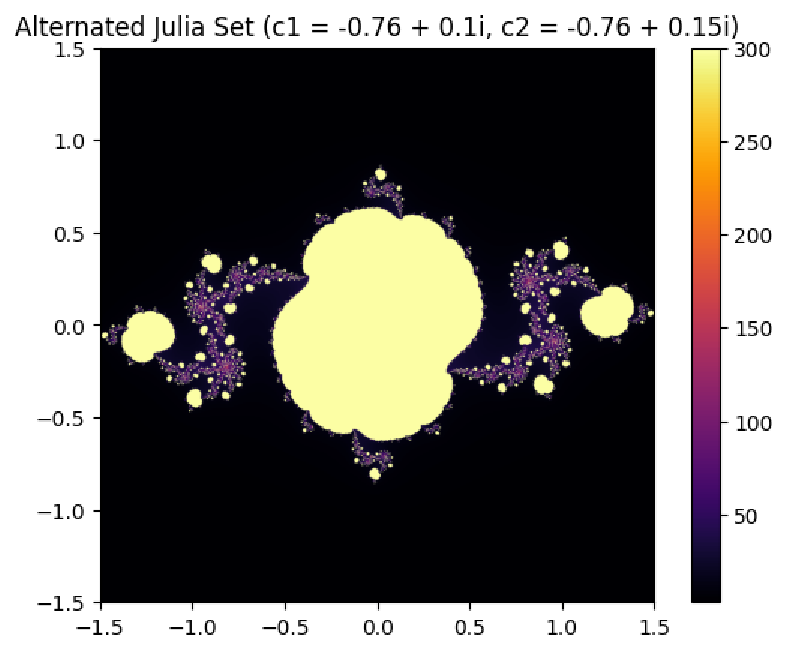} 
  \caption{A disconnected alternated Julia set with $c_1=-0.76+0.1i$, $c_2=-0.76+0.15i$. }
  \label{fig:disconnected}
\end{figure}

\begin{figure}[htbp]
  \centering
  \includegraphics[width=0.5\textwidth]{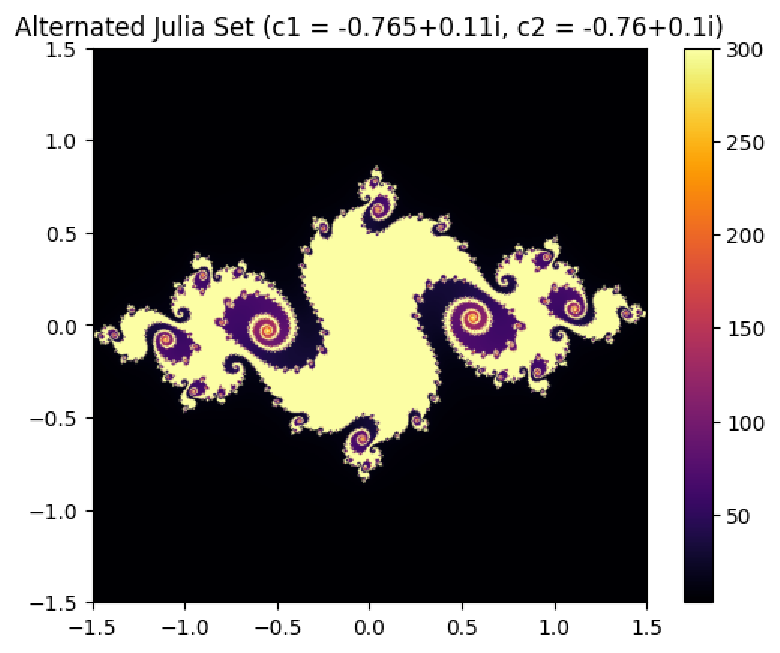} 
  \caption{A connected alternated Julia set with $c_1=-0.765+0.11i$, $c_2=-0.76+0.1i$. }
  \label{fig:connected}
\end{figure}
\FloatBarrier
\section{$p$-adic Mandelbrot sets and $p$-adic Julia sets} Having concluded our examination of the alternated Julia set within the framework of complex dynamics, we now proceed to consider analogous constructions in the context of $p$-adic Mandelbrot and Julia sets. This transition from the complex to the $p$-adic setting highlights profound differences in topology and analytic behavior, offering new perspectives on dynamical systems.
In the complex dynamical setting, the Mandelbrot set is defined by the connectivities of Julia sets of functions $F(z) = z^2 + c$ in the quadratic family. We can therefore formulate a definition of $p$-adic Mandelbrot sets of quadratic families with boundedness of orbits of the origin. To begin with, we need to define the $p$-adic analog of the complex field $\C$.
\begin{definition}
    The $p$-adic analog $\C_p$ of the complex field $\C$ is defined as
\[
\C_p = \widehat{\overline{\Q_p}}
\]
where \(\overline{\Q_p}\) is the algebraic closure of the field of \(p\)-adic numbers \(\Q_p\),
    and \(\widehat{\overline{\Q_p}}\) denotes the completion of \(\overline{\Q_p}\) with respect to the \(p\)-adic norm \(|\cdot|_p\).
\end{definition}
We can then define the $p$-adic Mandelbrot set in the field.
\begin{definition}
    \label{def:order}
    The $p$-adic Mandelbrot set $\calm_p^2$ for quadratic families in $\C_p$ consists of the points where the sequence $\{F^n(0)\}$, generated by iterating the function \[F: \C_p \to \C_p, \text{ } F(z) = z^2 + c \text{ for a given } c\in\C_p\] remains bounded.
\end{definition}
However, the structure of $\calm_p^2$ that corresponds to the quadratic family is simple in contrast to its complex-dynamical counterpart. 
\begin{theorem}\cite[Page 2]{MR3113229}
    $\calm_p^2$ is the unit disk around the origin. 
\end{theorem}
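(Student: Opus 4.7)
The plan is to prove set equality $\calm_p^2 = \{c \in \C_p : |c|_p \le 1\}$ by a clean case split on $|c|_p$, exploiting the ultrametric inequality $|x+y|_p \le \max(|x|_p, |y|_p)$ (with equality whenever $|x|_p \neq |y|_p$). This is the decisive structural difference from the Archimedean setting: in $\C_p$ squaring an element of norm $>1$ strictly increases its norm, so there is no cancellation with the constant term $c$ that could pull the orbit back.

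First, suppose $|c|_p \le 1$. I would prove by induction that $|z_n|_p \le 1$ for all $n \ge 0$, where $z_0 = 0$ and $z_{n+1} = z_n^2 + c$. The base case is immediate. For the inductive step,
\[
|z_{n+1}|_p = |z_n^2 + c|_p \le \max\paren{|z_n|_p^2,\, |c|_p} \le \max(1,1) = 1.
\]
So the entire orbit lies in the closed unit disk and is therefore bounded; hence every such $c$ lies in $\calm_p^2$.

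Next, suppose $|c|_p > 1$. I would show that $|z_n|_p$ grows like $|c|_p^{2^{n-1}}$, hence to infinity. Indeed $|z_1|_p = |c|_p > 1$. Assuming $|z_n|_p > 1$, we have $|z_n^2|_p = |z_n|_p^2 > |z_n|_p \ge |c|_p$, so the two terms in $z_{n+1} = z_n^2 + c$ have distinct norms and the strict-equality case of the ultrametric inequality gives
\[
|z_{n+1}|_p = |z_n|_p^2 > 1.
\]
Iterating yields $|z_n|_p = |c|_p^{2^{n-1}} \to \infty$, so the orbit is unbounded and $c \notin \calm_p^2$.

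Combining the two cases gives the desired equality $\calm_p^2 = \{c \in \C_p : |c|_p \le 1\}$. I do not anticipate a genuine obstacle here — the only subtle point is ensuring that the first iterate already crosses the threshold $|z|_p > 1$ when $|c|_p > 1$, which lets the strict form of the ultrametric inequality kick in and keeps the orbit from ever returning to the unit disk; this is what replaces the more delicate escape-radius arguments needed over $\C$.
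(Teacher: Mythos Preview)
Your proof is correct and follows essentially the same approach as the paper's: an ultrametric induction showing $|z_n|_p \le 1$ when $|c|_p \le 1$, and double-exponential growth of $|z_n|_p$ when $|c|_p > 1$. One small wrinkle: in the unbounded case your stated inductive hypothesis ``$|z_n|_p > 1$'' does not by itself yield the inequality $|z_n|_p \ge |c|_p$ you use in the next breath; simply carry the stronger hypothesis $|z_n|_p \ge |c|_p$ (which holds at $n=1$ and is preserved since then $|z_{n+1}|_p = |z_n|_p^2 \ge |c|_p^2 > |c|_p$), and the argument goes through verbatim.
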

\begin{proof}
    For the case $|c|_p \leq 1$, $|F(0)|_p = |c|_p \leq 1$.
Suppose that for all $n\leq N\in\N$, there is a positive number $r$ such that $|c|_p \leq r \leq 1$ and $|F^n(0)|_p \leq r$. Then, \[|F^{n+1}(0)|_p = |F^n(0)^2 + c|_p\] is bounded by $\max(|c|_p, r^2)\leq r$. Therefore, by induction, for all $|c|_p \leq 1$, $c$ is in the $p$-adic Mandelbrot set. For $|c|_p > 1$, $|F^n(0)|_p$ grows double exponentially with $n$, so any point outside the unit disk is not in the $p$-adic Mandelbrot set $\calm_p^2$.
\end{proof}
We want to study the more complex dynamics in the $p$-adic space by expanding the definition of $p$-adic Mandelbrot sets to a more general case. The reason why we only consider a single type of orbit $\{F^n(0)\}$ is that each function in the quadratic family has only one critical point at the origin, while higher-degree polynomials have multiple critical points and, therefore, more diverse types of dynamical and connectivity behaviors determined by Theorem \ref{thm:connectivity}. With this in mind, we can define the Mandelbrot set of higher-degree polynomials in $\C_p$. 

\begin{definition}
    For a vector $v := (c_1, c_2, ...,c_{d-1})\in\C_p^{d-1}$, consider the degree $d$ polynomial $F_v(z) = z^d + c_1z^{d-2} + c_2z^{d-3} + ...+c_{d-2}z+c_{d-1}$. The $p$-adic Mandelbrot set $\calm_p^d$ of degree $d$ can be defined as the set of $v$ such that all critical orbits of $F_v$ are bounded in $\C_p$.
\end{definition}

To know whether the structure of a given $p$-adic Mandelbrot set is simple, we can use the following theorem to conclude that some Mandelbrot sets share the same geometric property.
\begin{theorem}
        \label{thm:disk} \cite[Theorem 1]{MR3113229} 
    If $p\geq n$, then $\calm_p^n$ is the poly-disk \[\{(c_1, c_2,...,c_{n-1})\text{ where }|c_k|_p\leq1 \text{ for } 1\leq k\leq n-1 \}.\]
\end{theorem}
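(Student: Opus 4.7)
The plan is to prove both inclusions of the claimed equality $\calm_p^n = \{(c_1,\ldots,c_{n-1}) : |c_k|_p \leq 1 \text{ for all } k\}$ by generalizing the two-step argument used in the proof that $\calm_p^2$ is the unit disk. For the forward inclusion, I would first observe that the closed unit disk $D = \{z \in \C_p : |z|_p \leq 1\}$ is mapped into itself by $F_v$ whenever $|c_k|_p \leq 1$ for every $k$; this is immediate from the ultrametric inequality applied termwise to $F_v(z) = z^n + c_1 z^{n-2} + \cdots + c_{n-1}$. Next, I would analyze the Newton polygon of $F_v'(z) = nz^{n-1} + \sum_{k=1}^{n-2} (n-1-k)\, c_k\, z^{n-2-k}$: the hypothesis $p \geq n$ makes each integer coefficient appearing in $F_v'$ a $p$-adic unit, so the polygon's vertices sit at heights $v_p(c_k) \geq 0$ with the rightmost vertex at height zero; consequently every slope is non-positive and every critical point of $F_v$ lies in $D$. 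Combining the two observations, all critical orbits remain in $D$, so $v \in \calm_p^n$.

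For the reverse inclusion, I would assume some $|c_j|_p > 1$ and exhibit an unbounded critical orbit, splitting into two cases. If only $|c_{n-1}|_p > 1$ (with all other coefficients in the unit disk), the Newton polygon argument above still places every critical point $\alpha$ inside $D$, but then the ultrametric inequality forces $|F_v(\alpha)|_p = |c_{n-1}|_p > 1$, and the leading term then dominates so that $|F_v^m(\alpha)|_p = |c_{n-1}|_p^{n^{m-1}} \to \infty$. If instead $|c_j|_p > 1$ for some $j \leq n-2$, the Newton polygon of $F_v'$ now has a vertex below the $x$-axis, forcing a segment of positive slope and producing a critical point $\alpha$ with $|\alpha|_p > 1$. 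A standard escape-radius argument shows that once $|z|_p$ exceeds $R := \max(1, \max_k |c_k|_p^{1/(k+1)})$, the leading term of $F_v$ dominates and the iterates grow double-exponentially, so it suffices to verify that $\alpha$ or one of its iterates exceeds $R$.

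The main obstacle I anticipate is precisely this last verification: checking that the critical point furnished by the Newton polygon truly escapes to infinity rather than remaining trapped in an annulus $1 < |z|_p \leq R$. I expect to resolve this by identifying the segment of the Newton polygon containing the vertex $(n-2-j, v_p(c_j))$ that witnesses $|c_j|_p > 1$, reading off the slope as a precise bound on $v_p(\alpha)$, and then comparing $|\alpha|_p$ against $R$ directly; if $\alpha$ itself does not surpass $R$, one iteration should, because $|F_v(\alpha)|_p$ is still pushed up by the dominant term corresponding to $c_j$. This bookkeeping is the only place where the argument departs substantively from the simple induction that sufficed for $\calm_p^2$.
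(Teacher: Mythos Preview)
The paper does not prove this theorem; it is quoted from \cite[Theorem~1]{MR3113229} and used as a black box, so there is no proof in the present paper against which to compare your proposal.

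For what it is worth, your Newton-polygon strategy is the standard one and is essentially how the result is established in the cited source. One point to flag: your assertion that ``$p \geq n$ makes each integer coefficient appearing in $F_v'$ a $p$-adic unit'' is false at the boundary $p=n$, since the leading coefficient of $F_v'$ is $n$ itself and then $|n|_p=1/p<1$. In fact the poly-disk description breaks there: for $p=n=3$ and $(c_1,c_2)=(1,0)$ the polynomial $F(z)=z^3+z$ has critical points satisfying $z^2=-1/3$, hence $|\alpha|_3=3^{1/2}>1$, and one computes $|F(\alpha)|_3=3^{3/2}$, $|F^2(\alpha)|_3=3^{9/2}$, and so on, so $(1,0)\notin\calm_3^3$ despite lying in the unit poly-disk. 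This is a transcription slip in the paper's hypothesis (the correct condition is the strict inequality $p>n$), not a defect in your method; under $p>n$ your forward-inclusion argument goes through exactly as written, and your reverse-inclusion sketch, while leaving some Newton-polygon bookkeeping to be completed, is on the right track.
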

This theorem can also be used to explain why the Mandelbrot set of the quadratic family (where $n=2$) is a unit disk. Some calculations and examples in \cite[Page 2 and 3]{MR3113229} and \cite[Figure 1, 2, and 3]{MR3040666} also show that the complexity of some $\calm_p^n$ with $n>p$ can be comparable to its counterpart in the complex-dynamical setting.
\subsection{Examples} \cite[Example 2]{MR3113229}
Consider the function \[F:\C_2\rightarrow\C_2,\text{ }F(z) = z^3 - \frac{3}{4}z-\frac{3}{4}.\] By solving $F'(z) = 3z^2 - \frac{3}{4} = 0$, we find that the critical points of $F$ are $\pm\frac{1}{2}$. We can find out that the critical orbit at $\frac{1}{2}$ is bounded:
\[
\begin{tikzcd} \frac{1}{2} \arrow[r, mapsto, "F"] & -1 \arrow[r, mapsto, "F"] & -1. \end{tikzcd}
\]
We also know that the critical orbit at $-\frac{1}{2}$ is bounded:
\[
\begin{tikzcd}
-\frac{1}{2} \arrow[r, mapsto, "F"] & -\frac{1}{2}.
\end{tikzcd}
\]
Therefore, the pair $(-\frac{3}{4},-\frac{3}{4})\in\calm_2^3$. Since $|\frac{3}{4}|_2 = 4$, the Mandelbrot set $\calm_2^3$ is not a unit poly-disk. 

Similarly, we can define the $p$-adic analog of the filled Julia set in $\C_p$. 
\begin{definition}
    For a given function $F:\C_p\rightarrow\C_p$, we can define the $p$-adic filled Julia set $Q_p(F)$ as the set of points $z$ where the orbit of $z$ under $F$ is bounded. 
\end{definition}
The $p$-adic filled Julia set is a useful tool for studying the arithmetic dynamics of a given function in the $p$-adic field.
\section{$p$-adic alternated Julia sets}
In this section, we will extend the results from \cite{MR2567586}, \cite{MR3040666}, and \cite{MR3113229} to propose the definition of $p$-adic alternated filled Julia sets and some basic properties of them. Based on the previous discussion, we can formulate the definition of the orbits of two given alternated functions $F_1:\C_p\rightarrow\C_p$ and $F_2:\C_p\rightarrow\C_p.$
For simplicity, we start the discussion focusing on the system consisting of the quadratic family.
\begin{definition}
    For two given functions 
    \[
      \begin{tikzcd}[row sep=tiny]
        F_1: \C_p \arrow[r] & \C_p \\
        z \arrow[r, mapsto] & z^2 + c_1 \end{tikzcd}\]
\text{ and} 
        \[\begin{tikzcd}[row sep=tiny]
        F_2: \C_p \arrow[r] & \C_p \\
        z \arrow[r, mapsto] & z^2 + c_2 
      \end{tikzcd}\]
    
     where $c_1$ and $c_2$ are constants in $\C_p$, we define the orbit $P^{c_1c_2}_p(z_0)$ of a given starting point $z_0\in\C_p$ as the set $\{z_i|z_{2i} = F_2(z_{2i-1}), z_{2i+1} = F_1(z_{2i})\}$.
\end{definition}
We can also define the $p$-adic alternated Julia set in this manner.
\begin{definition}
    The $p$-adic alternated Julia set $K^{c_1c_2}_p$ of $F_1(x) = x^2+c_1, F_2(x) = x^2+c_2\in\C_p[x]$ is the set of points $z$ where its orbit $P^{c_1c_2}_p(z)$ is bounded under the $p$-adic distance metric. 
\end{definition}
In contrast to the normal quadratic $p$-adic set that has only one critical orbit, the structure of the $p$-adic alternated Julia set is more complex. To describe the structure of the $p$-adic alternated Julia set, we have the following theorem.
\begin{theorem}
    \label{thm:p-adic boundedness}
    The even terms $z_{2i}$ of the orbit $P^{c_1c_2}_p(z_0)$ are bounded iff the odd terms $z_{2i+1}$ of the orbit are also bounded. 
\end{theorem}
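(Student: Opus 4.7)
The plan is to exploit the non-Archimedean (ultrametric) nature of $|\cdot|_p$, which gives both directions symmetrically and more cleanly than the complex analogue of Proposition~\ref{prop:boundedness}. Recall that the defining recurrences are $z_{2i+1} = z_{2i}^2 + c_1$ and $z_{2i} = z_{2i-1}^2 + c_2$, and that in $\C_p$ we have the strong triangle inequality $|a+b|_p \leq \max(|a|_p, |b|_p)$. The key observation is that neither direction requires inverting a square, as the complex proof does via $z_{2i-1} = \sqrt{z_{2i}-c_2}$; instead, both implications follow by a direct application of the ultrametric inequality to the forward recurrences.

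For the forward direction, I would suppose that the even subsequence is bounded, say $|z_{2i}|_p \leq M$ for all $i$, and then estimate
\[
|z_{2i+1}|_p \;=\; |z_{2i}^2 + c_1|_p \;\leq\; \max\bigl(|z_{2i}|_p^2,\, |c_1|_p\bigr) \;\leq\; \max\bigl(M^2,\, |c_1|_p\bigr),
\]
giving a uniform bound on the odd subsequence. The backward direction proceeds identically, replacing $c_1$ by $c_2$: if $|z_{2i+1}|_p \leq M'$ for all $i$, then
\[
|z_{2i+2}|_p \;=\; |z_{2i+1}^2 + c_2|_p \;\leq\; \max\bigl((M')^2,\, |c_2|_p\bigr),
\]
bounding all even terms $z_{2i}$ with $i \geq 1$; since $z_0$ is a single fixed starting point it is trivially bounded, so the full even subsequence is bounded.

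The main obstacle, if there is one, is essentially conceptual rather than technical: one must recognize that the asymmetry in the complex-dynamical proof (which needed a square root to pass from even to odd) disappears in the $p$-adic setting because the ultrametric inequality lets us control $|z^2+c|_p$ by $\max(|z|_p^2,|c|_p)$ in either direction of the iteration. The actual estimates are one-line applications of this inequality, and no additional machinery is required.
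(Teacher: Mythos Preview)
Your proposal is correct and follows essentially the same approach as the paper: both arguments apply the ultrametric inequality $|a+b|_p \leq \max(|a|_p,|b|_p)$ directly to the recurrences $z_{2i+1}=z_{2i}^2+c_1$ and $z_{2i}=z_{2i-1}^2+c_2$ to transfer boundedness between the even and odd subsequences. Your write-up is in fact slightly cleaner, handling the backward direction directly rather than via the contrapositive and explicitly noting that the initial term $z_0$ is trivially bounded.
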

\begin{proof}
    Suppose that for all $n\in\N$, $|z_{2n}|\leq r$ where $r$ is a non-negative real number. We have
    \begin{align*}
        |z_{2n+1}|_p &= |z_{2n}^2 + c_1|_p \\
                & \leq \max(|z_{2n}^2|_p, |c_1|_p) \\
                & \leq \max(r^2, |c_1|_p) \text{.}
    \end{align*}
    Therefore, all odd terms are bounded. If the set of all even terms $\{z_{2n}\}$ is unbounded,
\begin{align*}
    \sup |z_{2n+1}^2| &= \sup |z_{2n}^2 + c_1|_p \\
    &= \sup |z_{2n}^2|_p \to \infty.
\end{align*}
    As a result, the odd terms are also unbounded if the even terms are unbounded. Similarly, by symmetry we can show that if $|z_{2n-1}|$ is bounded, $|z_{2n}| =|z_{2n-1}^2 + c_2|$ is also bounded and if 
    $|z_{2n-1}|$ is unbounded then $|z_{2n}| =|z_{2n-1}^2 + c_2|$ should also be unbounded. 
\end{proof}
We can also formulate an auxiliary $p$-adic Julia set of the alternated system similar to what we have done in the complex case with the following theorem. 
\begin{theorem}
    \label{thm:general}
    Let $F_1, F_2 \in \C_p[x]$ and $F = F_2\circ F_1$. Then the $p$-adic alternated filled Julia set of $F_1$ and $F_2$ is given by $Q_p(F)$.
\end{theorem}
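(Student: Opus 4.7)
The plan is to reduce the problem to iterating the single polynomial $F = F_2 \circ F_1$ by analyzing the even-indexed subsequence of the alternating orbit, and then invoking (a suitable generalization of) Theorem \ref{thm:p-adic boundedness} to transfer boundedness between the even and odd subsequences.

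First I would unwind the recursion defining $P^{c_1c_2}_p(z_0)$. Starting from $z_0$, we have $z_1 = F_1(z_0)$, $z_2 = F_2(z_1) = (F_2 \circ F_1)(z_0) = F(z_0)$, and more generally an easy induction gives $z_{2n} = F^n(z_0)$ for every $n \geq 0$. Consequently the even-indexed subsequence of the alternating orbit is exactly the $F$-orbit of $z_0$, so $\{z_{2n}\}_{n\geq 0}$ is bounded in $\C_p$ iff $z_0 \in Q_p(F)$.

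Next I would show that boundedness of $\{z_{2n}\}$ is equivalent to boundedness of the full alternating orbit $P^{c_1c_2}_p(z_0)$. This is the content of Theorem \ref{thm:p-adic boundedness}, but that statement was formulated for the quadratic maps $z^2 + c_1$ and $z^2+c_2$, while here $F_1, F_2$ are allowed to be arbitrary polynomials in $\C_p[x]$. I would close this gap with the standard non-Archimedean growth estimate: for any nonconstant $G \in \C_p[x]$ of degree $d$ with leading coefficient $a_d$, one has $|G(z)|_p = |a_d|_p |z|_p^d$ whenever $|z|_p$ exceeds some explicit constant depending on the coefficients of $G$. This immediately implies that $G$ maps bounded sets to bounded sets and that $|z_n|_p \to \infty$ forces $|G(z_n)|_p \to \infty$. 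Applying this to $G = F_1$ gives ``$\{z_{2n}\}$ bounded $\iff \{z_{2n+1}\}$ bounded'', and applying it to $G = F_2$ takes care of the reverse passage, yielding the desired equivalence exactly as in the quadratic case. Combining the two steps, $P^{c_1c_2}_p(z_0)$ is bounded iff $\{z_{2n}\}$ is bounded iff $z_0 \in Q_p(F)$, so $K^{c_1c_2}_p = Q_p(F)$.

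The only real obstacle is this slight mismatch in generality between Theorem \ref{thm:p-adic boundedness} (quadratic) and the statement of Theorem \ref{thm:general} (arbitrary polynomial). The resolution is a routine non-Archimedean size comparison for polynomials, so I do not expect any substantive difficulty; the bulk of the proof is simply the identification $z_{2n}=F^n(z_0)$.
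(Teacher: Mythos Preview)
Your proposal is correct and follows essentially the same approach as the paper: identify the even-indexed subsequence with the $F$-orbit, then transfer boundedness between the even and odd subsequences via the ultrametric inequality, generalizing Theorem~\ref{thm:p-adic boundedness} from quadratics to arbitrary polynomials. Your write-up is in fact more explicit than the paper's own proof, which is a terse sketch pointing back to Theorem~\ref{thm:p-adic boundedness} and noting that finiteness of the coefficients' $p$-adic absolute values and of $\deg(F)$ suffices; you spell out the identification $z_{2n}=F^n(z_0)$ and the non-Archimedean growth estimate that underlies the transfer, which the paper leaves to the reader.
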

\begin{proof}
    The proof follows the same structure as the argument used in Theorem~\ref{thm:p-adic boundedness}. In particular, to show that the odd and even terms of an orbit share the same boundedness properties, we analyze the behavior of the terms using the $p$-adic absolute values. Since the coefficients in both $F_1$ and $F_2$ have finite $p$-adic absolute values and $\deg(F) < \infty$, the boundedness properties carry over from one subsequence to the other. 
\end{proof}

Based on Theorem~\ref{thm:general} and Theorem~\ref{thm:disk}, we can have the following Theorem.
\begin{theorem}    
    \label{thm:p-adic connectivity}
    Let $F_1, F_2 \in \C_p[x]$ and $F = F_2\circ F_1$. If $\deg(F) \leq p$ and all coefficients of $F$ have $p$-adic absolute values less than or equal to $1$, then the $p$-adic alternated Julia set formed by alternating between $F_1$ and $F_2$ is connected.
\end{theorem}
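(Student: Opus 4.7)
The plan is to combine Theorem~\ref{thm:general} with the structure theorem for $p$-adic Mandelbrot sets in low degree (Theorem~\ref{thm:disk}) and a $p$-adic analog of the critical-orbit criterion in Theorem~\ref{thm:connectivity}. First I would apply Theorem~\ref{thm:general} to reduce the problem: the $p$-adic alternated filled Julia set of $F_1$ and $F_2$ coincides with $Q_p(F)$ for $F = F_2 \circ F_1$, so it suffices to prove that $Q_p(F)$ is connected.

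Next I would realize $F$ as a point of the parameter space underlying $\calm_p^d$, where $d = \deg F \leq p$. After an affine change of coordinate on $\C_p$—which preserves both the connectivity of $Q_p(F)$ and the boundedness of critical orbits—putting $F$ into the monic, centered form $z^d + a_1 z^{d-2} + \cdots + a_{d-1}$ used in Definition~\ref{def:order}, the hypothesis that every coefficient of $F$ has $p$-adic absolute value at most $1$ places the parameter vector $(a_1,\dots,a_{d-1})$ inside the closed unit poly-disk of $\C_p^{d-1}$. Since $d \leq p$, Theorem~\ref{thm:disk} identifies this poly-disk with $\calm_p^d$, so the parameter vector of $F$ belongs to $\calm_p^d$; by definition, every critical orbit of $F$ is then bounded in $\C_p$.

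Finally, I would invoke the $p$-adic analog of Theorem~\ref{thm:connectivity}: boundedness of every critical orbit of $F$ implies that $Q_p(F)$ is connected. Combined with the reduction in Step~1, this yields connectivity of the $p$-adic alternated Julia set of $F_1$ and $F_2$.

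The main technical obstacle is justifying the affine normalization in Step~2, because completing the $d$-th power to eliminate the $z^{d-1}$ coefficient requires dividing by $d$, which degenerates exactly in the boundary case $d = p$. To handle that case I would fall back on a direct ultrametric argument bypassing Theorem~\ref{thm:disk}: if every coefficient of $F$ has $p$-adic absolute value at most $1$, the ultrametric inequality forces $F(\bar D(0,1)) \subseteq \bar D(0,1)$, while outside the unit disk the leading term dominates and drives every orbit to infinity, giving $Q_p(F) = \bar D(0,1)$ explicitly—a single closed $p$-adic disk, which is connected in the Berkovich-analytic sense that is relevant here.
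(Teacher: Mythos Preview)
Your proposal follows the same line as the paper's proof: reduce to $Q_p(F)$ via Theorem~\ref{thm:general}, place $F$ inside $\calm_p^{\deg F}$ using Theorem~\ref{thm:disk}, and conclude connectivity. You are in fact more careful than the paper, which silently skips both the affine normalization needed to match the monic centered form in the definition of $\calm_p^d$ and the passage from ``critical orbits bounded'' to ``$Q_p(F)$ connected''; your separate treatment of the boundary case $d=p$ via a direct ultrametric computation of $Q_p(F)$ is an addition the paper does not make.
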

\begin{proof}
    If $\deg(F) \leq p$, then by Theorem~\ref{thm:disk}, we know that $\calm_p^{\deg(F)}$ is a unit poly-disk. Therefore, if all coefficients of $F$ have $p$-adic absolute values less than or equal to $1$, the coefficient vector of $F$ lies within $\calm_p^{\deg(F)}$. This implies that the $p$-adic Julia set $Q_p(F)$ of $F$ is connected. Since we know that $Q_p(F)$ is the $p$-adic alternated Julia set of $F_1$ and $F_2$ by Theorem~\ref{thm:general}, we can conclude that the $p$-adic alternated Julia set is connected.
\end{proof}
With the results established in the previous theorems, we can infer the connectivity of the $p$-adic alternated Julia set of functions in the quadratic family by examining the auxiliary quartic polynomial associated with the alternated dynamics and analyzing its parameters with respect to the $p$-adic Mandelbrot set $\calm_p^4$. According to Theorem~\ref{thm:disk}, when $p\geq 4$, $\calm_p^4$ forms a poly-disk, which allows us to easily determine whether the corresponding $p$-adic Julia set is connected. In contrast, for $p\leq3$, the structure of $\calm_p^4$ becomes highly intricate, and the connectivity of the $p$-adic Julia set can no longer be reliably inferred without computational assistance.

Given a pair of parameters $c_1$ and $c_2$, we can construct the associated auxiliary quartic polynomial and use it along with the geometry of \(\calm_p^4\) to determine the connectivity of the \(p\)-adic alternated Julia set. This method extends to arbitrary polynomial functions \(F_1\) and \(F_2\), where the connectivity of the \(p\)-adic Julia set of the composed map \(F = F_2\circ F_1\) can be understood by checking the coefficients of \(F\) and comparing \(p\) with \(\deg(F)\) based on Theorem~\ref{thm:p-adic connectivity}.

\subsection{Example}
Consider the following two functions:
    \[
      \begin{tikzcd}[row sep=tiny]
        F_1: \C_p \arrow[r] & \C_p \\
        z \arrow[r, mapsto] & z^2 - \frac{1}{4} \end{tikzcd}\]
\text{ and} 
        \[\begin{tikzcd}[row sep=tiny]
        F_2: \C_p \arrow[r] & \C_p \\
        z \arrow[r, mapsto] & z^2 + \frac{1}{2}. 
      \end{tikzcd}\]

Then, the composition \( F = F_2 \circ F_1 \) is given by:
\begin{align*}
  F(z) &= (F_2 \circ F_1)(z) \\
       &= \left(z^2 - \frac{1}{4}\right)^2 + \frac{1}{2} \\
       &= z^4 - \frac{1}{2} z^2 + \frac{9}{16}, \\
  F'(z) &= 4z^3 - z.
\end{align*}

The critical points of \( F \) are \( \pm \frac{1}{2} \) and \( 0 \). Note that since \(\deg(F) = 4 > 2 \), we cannot determine the connectivity of \( Q_2(F) \) solely by inspecting the coefficients. Since \( \frac{1}{2} \) is a fixed point and $F(-\frac{1}{2}) = \frac{1}{2}$, we only need to consider the critical point at the origin.

For \( p = 2 \),
\[
  |F^n(0)|_2 = 2^{4^n} \to \infty,
\]
so \( Q_2(F) \) is not connected.

For all \( p \geq 5 \), we have \( \deg(F) = 4 \leq p \), and all coefficients of \( F \) have \( p \)-adic absolute values equal to 1. Therefore, by Theorem~\ref{thm:p-adic connectivity}, \( Q_p(F) \) is connected.

In essence, this provides a practical criterion: By reducing an alternated dynamic to a single auxiliary polynomial, we can leverage known properties of $p$-adic Mandelbrot sets to deduce the connectivity of $p$-adic alternated Julia sets.

\bibliography{Review}{}
\bibliographystyle{alpha}

\end{document}